\documentclass[11pt,a4paper,reqno]{amsart} 

\usepackage [english]{babel}  

\DeclareFontFamily{U}{mathb}{\hyphenchar\font45}
\DeclareFontShape{U}{mathb}{m}{n}{
      <5> <6> <7> <8> <9> <10>
      <10.95> <12> <14.4> <17.28> <20.74> <24.88>
      mathb10
      }{}
\DeclareSymbolFont{mathb}{U}{mathb}{m}{n}

\DeclareMathSymbol{\sqbullet}{1}{mathb}{"0D}

\allowdisplaybreaks 

\usepackage{amsmath,mathtools}
\usepackage{amssymb}
\usepackage{euscript}
\usepackage{bm}
\usepackage{graphicx}
\usepackage[all]{xy}
\usepackage[dvipsnames]{xcolor}
\usepackage[colorlinks=true,linktocpage=true,pagebackref=false, citecolor=black,linkcolor=black]{hyperref}
\usepackage{pifont}
\usepackage{tikz,pgfplots}
\pgfplotsset{compat=1.10}
\usepgfplotslibrary{fillbetween}
\usetikzlibrary{cd,arrows,decorations.pathmorphing,backgrounds,automata,positioning,fit,matrix}
\usepackage{caption,subcaption}
\usepackage{comment}
\usepackage{xcolor}
\usepackage{enumitem} 

\usetikzlibrary{matrix}
\usetikzlibrary{decorations.pathreplacing, calc, fit, backgrounds}

\pgfplotsset{soldot/.style={color=black,only marks,mark=*}} \pgfplotsset{holdot/.style={color=black,fill=white,only marks,mark=*}}

\newtheorem{thm}{Theorem}[section]

\newtheorem{lem}[thm]{Lemma}

\newtheorem{quest2}[thm]{Question}

\theoremstyle{definition}
\newtheorem{defn}[thm]{Definition}

\newtheorem{remark}[thm]{Remark}

\newtheorem{example}[thm]{Example}

\theoremstyle{remark}

\numberwithin{equation}{section}
\numberwithin{figure}{section}

 \newcommand{\R}{{\mathbb R}}
 \newcommand{\C}{{\mathbb C}}

\newcommand{\Rr}{{\EuScript R}}

\newcommand{\cl}{\operatorname{Cl}}

\newcommand{\x}{{\tt x}} \newcommand{\y}{{\tt y}} 
\newcommand{\z}{{\tt z}} \renewcommand{\t}{{\tt t}}
 
\newcommand{\w}{{\tt w}}

\usepackage{scalerel}

\begin{document}

$\;$

\vspace{-2em}

\title[$K$-holomorphic functions with definable real part]{$K$-holomorphic functions with definable real part}

\keywords{real closed fields, o-minimality, K-holomorphic functions, R-analytic functions, Nash functions.}
\subjclass[2020]{Primary: 03C64, 14P10; Secondary: 14P20, 26E05, 32B20}

\date{04/05/2026}

\author{Antonio Carbone}
\address{Antonio Carbone, Dipartimento di Scienze dell'Ambiente e della Prevenzione, Palazzo Turchi di Bagno, C.so Ercole I D'Este, 32, Università di Ferrara, 44121 Ferrara (ITALY)}
\email{antonio.carbone@unife.it}
\thanks{The first author is supported by GNSAGA of INdAM}

\author{Enrico Savi}
\address{Enrico Savi, LAREMA, Université d'Angers, 2 Bd de Lavoisier, 49000 Angers (FRANCE).}
\email{enrico.savi@univ-angers.fr}
\thanks{The second author is supported by GNSAGA of INdAM and by the ANR NewMIRAGE (ANR-23-CE40-0002)}

\begin{abstract}
Let $R$ be a real closed field and $K:=R(i)$ its algebraic closure.  Let $U\subset K^n$ be an open and definable set in a fixed o-minimal structure. In this note, we study the relationship between definability of a $K$-holomorphic function $f=f_1+if_2:U\to K$ and the definability and (strong) $R$-analyticity of its real part $f_1:U\to R$. Our results turn out to be the best possible {in general}, and their precision depends on the considered o-minimal structure. We obtain a complete characterisation in the semialgebraic case.
\end{abstract}

\maketitle 

\section{Introduction}

Let $R$ be a real closed field and $K:=R(i)$, where $i:=\sqrt{-1}$, its algebraic closure. The concept of convergence of power series is not suitable for a general real closed field $R$, hence nor for $K$. Therefore, one cannot define ‘holomorphic functions’ in $K^n$ as those functions admitting local expansions as convergent power series. For a function $f:U\to \C$, defined on a (non-empty) open subset $U$ of $\C^n$, being complex analytic is equivalent to be holomorphic, i.e. continuous and separately complex differentiable with respect to each variable. Thus, Peterzil and Starchenko \cite{PS01,PS03} have used this equivalence to define \textit{$K$-holomorphic} functions $f:U\to K$, where $U$ is an open subset of $K$ (see \S\ref{Ranalytic} for the precise definition). As the field $R$ has a natural order topology, that we will call \textit{Euclidean topology}, one can endow $K$ with the product topology after identifying $K$ with $R^2$ in the usual way. Since $R$ may not be complete nor Archimedean, $K$, equipped with the product topology, is, in general, neither locally compact nor locally connected. That is why Peterzil and Starchenko \cite{PS01,PS03} considered functions that are definable in some o-minimal structure on the real closed field $R$ in order to develop their theory of $K$-holomorphic functions on $K^n$. Under this assumption, they developed a satisfactory theory of $K$-holomorphic functions, that shares many of the properties of the classical holomorphic functions with additional tameness properties induced by definability. In particular, these tameness properties have recently shown to be extremely useful in Hodge theory, we refer the interest reader to \cite{BKT20,BBT23,BKU24} for additional information.

Recall that an \textit{o-minimal structure} on the real closed field $R$ is a collection $\mathfrak{\Rr}:=\{\Rr_n\}_{n\in\mathbb{N}^*}$ of families of subsets of $R^n$ satisfying the following properties:
\begin{itemize}
\item $\Rr_n$ is a Boolean algebra,
\item $\Rr_n$ contains the algebraic subsets of $R^n$,
\item if $X\in \Rr_n$ and $Y\in\Rr_m$, then $X\times Y\in\Rr_{n+m}$,
\item if $\pi:R^n\times R\to R^n$ is the projection onto the first factor and $X\in\Rr_{n+1}$, then $\pi(X)\in\Rr_n$,
\item $\Rr_1$ consists exactly of all the finite unions of points and intervals (of any type). 
\end{itemize}
The elements of $\Rr_n$ are called \textit{definable subsets of} $R^n$. A map $f:X\to Y$, between a definable subset $X\subset R^n$ and a definable subset $Y\subset R^m$, is a \textit{definable map} if its graph $\Gamma_f$ is a definable subset of $R^{n+m}$. Recall that a set $X\subset R^n$ is \textit{semialgebraic} if it is a Boolean combination of sets defined by polynomial equalities and inequalities with coefficients in $R$. As a consequence of the Tarski-Seidenberg theorem \cite[Thm.2.2.1]{BCR98}, the family of the semialgebraic sets constitute an o-minimal structure, which is the `smallest' o-minimal structure, in the sense that it is contained in any other o-minimal structure. In particular, semialgebraic sets and maps are definable in any o-minimal structure on the real closed field $R$. When $R=\R$, the collection of \textit{globally subanalytic sets}, introduced by van den Dries \cite{vdD86}, constitutes a necessary restriction for the class of subanalytic sets to generate an o-minimal structure, which coincides with the o-minimal structure $\R_{\text{an}}$ generated by restricted analytic functions. 
We also refer the reader to \cite{vdD98,vdDM96,YC04} for further {information and developments} on the theory of o-minimal structures, together with their applications to geometry and analysis. For the rest of this article, even if not explicitly mentioned, {whenever} we refer to definable sets and maps we mean definable in a fixed o-minimal structure on a fixed real closed field $R$.

{As opposed to the complex case, a characterisation for the notion of real analytic functions relying on differentiability conditions is not available and, as already mentioned before, the definition in terms of convergent power series is not suitable for general real closed fields.} Thus, one cannot directly extend the notion of {(real)} analytic functions to a {any} real closed field $R$. Recently, Kaiser \cite{Kai16b} proposed such a generalisation to arbitrary real closed fields by introducing the notion of \textit{$R$-analytic functions} and \emph{strongly $R$-analytic functions}, as those definable functions admitting a local or global definable complexification, respectively. We refer to \S\ref{Ranalytic} for the precise definitions. 

The main topic of this paper is the investigation on how definability properties of a $K$-holomorphic function $f:=f_1+if_2:U\to K$ are related to $R$-analyticity of its real part $f_1:U\to R$. As a motivation, in classical complex analysis, if $f:=f_1+if_2$ is a holomorphic function defined on an open subset of $\C^n$, then its real part $f_1$ is a real analytic function. Let $U$ be an open subset of $K^n$ and $f:=f_1+if_2:U\to K$ a definable $K$-holomorphic function. {Then,} it is natural to ask {the following question:} 

\begin{quest2}\label{quest1}
Is it true that if $f$ is definable and $K$-holomorphic, then its real part $f_1$ is definable and strongly $R$-analytic?
\end{quest2}

As a first result, we answer the previous question in the positive, by showing the following:

\begin{thm}\label{prop1}
Let $U\subset K^n$ be an open subset and $f:=f_1+if_2:U\to K$ a $K$-holomorphic function. If $f$ is definable, then $f_1$ is definable and strongly $R$-analytic.
\end{thm}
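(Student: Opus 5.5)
Definability of $f_1$ is immediate: $f_1$ is the composition of $f$, viewed as a definable map $U\subset R^{2n}\to R^2$, with the projection $(a,b)\mapsto a$. Hence its graph is the image of the definable graph of $f$ under a coordinate projection, and is therefore definable. The substantive part is strong $R$-analyticity. By Kaiser's definition, recalled in \S\ref{Ranalytic}, this means we must produce a definable open set $V\subset K^{2n}$ with $U\subset V\cap R^{2n}$, identifying $U\subset K^n$ with a subset of $R^{2n}$, together with a definable $K$-holomorphic $F:V\to K$ such that $F|_U=f_1$.

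I would imitate the classical complexification of the real part. Write $z=x+iy$ with $x,y\in R^n$, and introduce new complex variables $w=(w_1,\dots,w_n)$, $u=(u_1,\dots,u_n)\in K^n$ standing for the complexifications of $x$ and $y$. Let $\bar f(z):=\overline{f(\bar z)}$ on $\bar U:=\{\bar z: z\in U\}$. This function is again definable, since conjugation is $R$-linear, and it is $K$-holomorphic: continuity is clear, and the separate complex differentiability follows because conjugating the difference quotient $\big(f(\bar z+\bar h e_j)-f(\bar z)\big)/\bar h$ gives exactly the difference quotient of $\bar f$. Now set
\[
F(w,u):=\tfrac12\Big(f(w+iu)+\bar f(w-iu)\Big),
\]
defined on $V:=\{(w,u)\in K^{2n}: w+iu\in U,\ w-iu\in\bar U\}$. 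The set $V$ is open and definable, because it is the preimage of $U\times\bar U$ under an invertible $K$-linear map. For real $(w,u)=(x,y)$, the point $x+iy=z$ lies in $U$ and $x-iy=\bar z$ lies in $\bar U$, so $U\subset V\cap R^{2n}$. On this real locus $F(x,y)=\tfrac12\big(f(z)+\overline{f(z)}\big)=f_1(z)$, as required.

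It remains to check that $F$ is $K$-holomorphic. It is continuous and definable as a composition of such maps. For separate holomorphy in $w_j$ with the other variables fixed, $w_j\mapsto f(w+iu)$ is $f$ restricted to a complex line in direction $e_j$, so it is $K$-differentiable; the same argument applies to $u_j$, and likewise to $\bar f(w-iu)$. This is the one step needing care: the Peterzil--Starchenko definition only requires continuity and separate differentiability, and restricting $f$ to a coordinate translate of the $j$-th variable stays within that definition. Each $K$-linear substitution here moves only a single coordinate of $f$'s argument, so no Hartogs-type result is needed. I expect no real obstacle. The only point to watch is matching exactly the form of ``strongly $R$-analytic'' in \S\ref{Ranalytic}, namely whether a global definable complexification on a neighbourhood of $U$ suffices or whether a specific domain shape is required; the $V$ above is global and definable, so it should qualify.
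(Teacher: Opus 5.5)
Your proposal is correct and is essentially the paper's own proof. Your $F(w,u)=\tfrac12\big(f(w+iu)+\overline{f(\bar w+i\bar u)}\big)$ on $V$ is exactly Cartan's function $g$ on the set in \eqref{aperto}, and it restricts to $f_1$ on $V\cap R^{2n}$ as in \eqref{partereale}; the only difference is that you check $K$-holomorphy directly from the Peterzil--Starchenko definition (continuity plus separate $K$-differentiability, each variable entering $f$ through a single affine coordinate substitution), whereas Lemma~\ref{gholo} argues via $R$-differentiability and the Cauchy--Riemann equations.
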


In the recent work \cite{Car25}, the first author showed that the converse of Theorem \ref{prop1} holds true for Nash (i.e. semialgebraic and real analytic) functions when $R=\R$ \cite[Thm.1.3]{Car25}. It is then natural to ask whether the converse of Theorem \ref{prop1} holds true also for definable functions on a general real closed field $R$, namely, to ask the following:

\begin{quest2}\label{quest2}
Is it true that if $f$ is $K$-holomorphic and $f_1$ is definable and strongly $R$-analytic, then $f$ is also definable?
\end{quest2}

In general, Question \ref{quest2} has a negative answer, as  shown by the following example.

\begin{example}\label{ex}
Let $\log z$ denote the branch of {the} logarithm on $\C\setminus\{z\in \R : z\leq 0\}$ such that $\log1=0$. Consider the open semialgebraic set $U:=\{z=x+iy\in\C : 0<x<y\}$ and the holomorphic function 
$$
f:U\to \C, \quad z:=x+iy\mapsto -i\log z=\arctan\Big(\frac{y}{x}\Big)-\frac{i}{2}\log(x^2+y^2).
$$
The real part $f_1(x,y)=\arctan\big(\tfrac{y}{x}\big)$ is globally subanalytic on $U$ and, by \cite[Thm.A]{Kai16a}, admits a global complexification (i.e. it is strongly $\R$-analytic). While, the imaginary part $f_2(x,y)=-\tfrac{1}{2}\log(x^2+y^2)$ is not globally subanalytic on $U$. In fact, for instance, the restriction of $f_2$ to the half line {$\{z=x+iy\in\C : 2y=x, x>0\}$} is not globally subanalytic. Thus, the holomorphic function $f$ is not globally subanalytic on $U$. By contrast, observe that $f$ is definable in $\R_{\text{an},\exp}$, the o-minimal structure obtained by extending $\R_{\text{an}}$ with the exponential funciton $\exp:\R\to \R$, see \cite{vdDMM94}. \hfill$\sqbullet$
\end{example}

Nevertheless, the converse to Theorem \ref{prop1} holds true locally. A function $f:U\to R$, defined on an open subset $U$ of $R^n$, is \textit{locally definable} if for each point $x\in U$ there exists a definable open neighbourhood $V_x$ of $x$ in $U$ such that the restriction of $f$ to $V_x$ is a definable function. Our next result is the following: 

\begin{thm}\label{main}
Let $U$ be an open subset of $K^n$ and $f:=f_1+if_2:U\to K$ a $K$-holomorphic function. If the real part $f_1$ is locally definable and $R$-analytic, then $f$ is locally definable. 
\end{thm}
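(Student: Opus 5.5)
We argue locally: we fix a point $a\in U$ and aim to produce a definable open neighbourhood $W$ of $a$ on which $f$ is definable. Since $f_1$ is locally definable and $R$-analytic, there is a definable open neighbourhood $V\subset U$ of $a$, and after shrinking we may take $V$ to be a polydisc or box. On $V$, $f_1$ is definable and admits a definable complexification. Concretely, identifying $K^n$ with $R^{2n}$, we get a definable $K$-holomorphic function $F:\Omega\to K$ on a definable open set $\Omega\subset K^{2n}$ containing $V$ (viewed inside $R^{2n}\subset K^{2n}$), with $F|_V=f_1$.

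Next we write the real variables as $x_j,y_j$ with $z_j=x_j+iy_j$, and consider the holomorphic extension $F(x,y)$ with $x,y\in K^n$. Classically one has the identity
$$
f(z)=2F\Big(\tfrac{z+\bar a}{2},\tfrac{z-\bar a}{2i}\Big)-\overline{f(a)},
$$
which recovers a holomorphic function from its real part. The right-hand side involves only $F$, the constant $\bar a$, and the constant $\overline{f(a)}$, so it is definable wherever it makes sense. We therefore do the following:
\begin{itemize}
\item Shrink $V$ to a definable connected open neighbourhood $W$ of $a$, e.g.\ a small polydisc, such that $\big(\tfrac{z+\bar a}{2},\tfrac{z-\bar a}{2i}\big)\in\Omega$ for $z\in W$. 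This is possible because the point $z=a$ maps to $(\mathrm{Re}\,a,\mathrm{Im}\,a)\in V\subset\Omega$, and $\Omega$ is open.
\item Define $g(z):=2F\big(\tfrac{z+\bar a}{2},\tfrac{z-\bar a}{2i}\big)-\overline{f(a)}$ on $W$. This $g$ is definable and $K$-holomorphic, being a composition of $K$-holomorphic maps.
\item Show that $g=f$ on $W$.
\end{itemize}

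The last step is the main obstacle. We would compare real parts. Consider $h(z,w):=F\big(\tfrac{z+w}{2},\tfrac{z-w}{2i}\big)$, which is holomorphic in $(z,w)$ near $(a,\bar a)$ and satisfies $h(z,\bar z)=f_1(z)$. We also have $f_1=\tfrac12(f(z)+\overline{f(\bar{\bar z})})$. Set $\tilde f(w):=\overline{f(\bar w)}$, which is $K$-holomorphic in $w$. Then the function $\tfrac12(f(z)+\tilde f(w))$ is holomorphic in $(z,w)$ and agrees with $h$ on the totally real set $\{w=\bar z\}$. To conclude $h=\tfrac12(f(z)+\tilde f(w))$ near $(a,\bar a)$, we need an identity principle in the $K$-holomorphic setting: a holomorphic function vanishing on a totally real $n$-dimensional (in $K$-dimension) submanifold, here $\{w=\bar z\}$, vanishes identically on a connected neighbourhood.

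Here lies the difficulty, because $f$ itself is not yet known to be definable, so the Peterzil--Starchenko theory is not directly available for $f$. We would handle this as follows. $f$ is $K$-holomorphic by hypothesis, and the statement of the theorem presumably presupposes that $K$-holomorphic functions are treated within the definable framework. Instead, we use the Cauchy--Riemann equations directly. Since $\mathrm{Re}(g)$ and $f_1$ agree on $W$ (set $w=\bar z$ in the definition of $g$), the function $f-g$ is $K$-holomorphic with zero real part. By Cauchy--Riemann, each partial derivative of $f-g$ vanishes, because the imaginary part has zero gradient. On a connected definable set $W$ this forces $f-g$ to be a purely imaginary constant, and evaluating at $a$ shows the constant is $0$.

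The delicate point in this argument is "zero gradient implies constant" without definability of $f$. We would ensure it using the continuity and differentiability provided by the $K$-holomorphic hypothesis, applied along definable segments in a box $W$. If necessary, we would take this from the paper's preliminaries in \S\ref{Ranalytic}. We would also run the mean value argument on $\mathrm{Re}$ and $\mathrm{Im}$ of $g$, which are definable, and apply the Cauchy--Riemann equations to transfer the conclusion to $f$.
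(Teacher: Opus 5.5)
Your construction is the paper's: with $a=0$ and $f(0)=0$ your formula is exactly \eqref{uguaglianza}, with Cartan's $g$ replaced by $F$. You also correctly isolate the crux, which is a uniqueness statement for a $K$-holomorphic function not known to be definable. Your workaround does not supply it, for two reasons.

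First, $\mathrm{Re}(g)=f_1$ on $W$ does not follow by ``setting $w=\bar z$''. Your $g$ has its second argument frozen at $\bar a$, while $h(z,\bar z)=f_1(z)$ only gives information on the totally real set $\{w=\bar z\}$. Classically, $\mathrm{Re}(g)=f_1$ comes from the splitting $h(z,w)=\tfrac12\big(f(z)+\overline{f(\bar w)}\big)$, i.e.\ from the very identity principle you wanted to bypass.

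Second, ``zero derivative implies constant'' is false for non-definable functions when $R$ is non-archimedean. Then $R$ is totally disconnected, and the mean value theorem is only available in the definable category. Applying it to $\mathrm{Re}(g)$ and $\mathrm{Im}(g)$ does not help: the function whose derivative vanishes is $f-g$, and its imaginary part is not known to be definable.

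The paper bridges the same point by applying Lemma~\ref{principioidentita} to $g-F$ on $V_0'$. That lemma, via the Peterzil--Starchenko identity theorem, assumes definability, which $g$ does not have a priori. In fact, for non-archimedean $R$ the gap cannot be closed at all. Choose a representative $\rho([r])>0$ in each archimedean class $[r]$ of $R_{>0}$, and set $\phi(0):=0$, $\phi(r):=\rho([r])^2$.
\begin{itemize}
\item Since $|s-r|<r/2$ forces $[s]=[r]$, $\phi$ is locally constant on $R_{>0}$.
\item $\phi(r)/r$ lies in the class of $r$, so $\phi(r)=o(r)$ as $r\to0^+$.
\item Hence $f(z):=i\phi(|z|)$ is $K$-holomorphic on $K$ with $f'\equiv0$ and $f_1\equiv0$, which is definable and strongly $R$-analytic.
\item $\phi$ takes distinct values on distinct classes, and infinitely many classes meet every neighbourhood of $0$. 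So the image of $\phi$ near $0$ is infinite and contains no interval, and $f$ is definable on no neighbourhood of $0$.
\end{itemize}
In this example your $g$ is $0$, so $\mathrm{Re}(g)=f_1$ holds and it is precisely your second step that fails. Cartan's $g$ vanishes on $R^2$, yet $g(\epsilon,i\epsilon)=-\tfrac{i}{2}\phi(2\epsilon)\neq0$ for every $\epsilon>0$.

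So the argument is valid for $R=\mathbb{R}$, or whenever an identity principle for $f$ is available (e.g.\ $f$ definable in some o-minimal expansion). This applies to yours once the first step is derived from the identity principle, and to the paper's. It is not valid for the statement as literally formulated with the definitions of \S\ref{Ranalytic}.
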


It is worthwhile mentioning a couple of remarks in the case $R=\R$:

\begin{remark}
(i) Assume $f:=f_1+if_2:U\to \C$ is holomorphic on the open set $U\subset\C^n$ and assume $f_1$ is definable and $\R$-analytic. Since $f_1$ is definable, the Cauchy-Riemann equations ensure that the partial derivatives of $f_2$ are definable. Hence, \cite{Spe99} ensures that $f_2$ is definable in the Pfaffian closure of the o-minimal structure where $f_1$ is defined. Thus, $f$ is a locally definable holomorphic function which is definable in the Pfaffian closure, therefore, its `(global) tame behaviour' is also guaranteed. Moreover, by Theorem \ref{prop1}, we deduce that $f_1$ is also strongly $\R$-analytic with respect to the Pfaffian closure.

(ii) Kaiser \cite[Thm.A]{Kai16a} showed that globally subanalytic functions admit global complexifications (i.e. they are strongly $\R$-analytic in the sense of Definition \ref{strong} {below}). Thus, Theorem \ref{main} applies to the class of globally subanalytic functions. However, in light of Example \ref{ex}, the global definability of $f:U\to \C$ is not guaranteed in $\R_{\text{an}}$ but it is in the Pfaffian closure of $\R_{\text{an}}$ by (i). \hfill$\sqbullet$
\end{remark}

Thanks to a recent result of Kaiser \cite[Thm.B]{Kai16b} and the techniques developed in this article, we show that for semialgebraic functions Theorem \ref{main} holds true globally. In particular, we answer to Question \ref{quest2} in the positive in this case, by showing the following:

\begin{thm}\label{Nash}
Let $U\subset K^n$ be an open subset and $f:=f_1+if_2:U\to K$ a $K$-holomorphic function. If the real part $f_1$ is a semialgebraic function, then $f$ is also semialgebraic. 
\end{thm}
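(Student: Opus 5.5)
The approach is to reduce to the Nash case, produce a semialgebraic complexification of $f_1$, and then show that $f$ is locally semialgebraic in a uniform way, so that it satisfies one polynomial equation on each connected component of $U$. We may assume $U$ is semialgebraic, since it is the domain of the semialgebraic function $f_1$. By the definable cell decomposition, $U$ has finitely many semialgebraically connected components, so we may also assume $U$ is semialgebraically connected.

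\emph{Step 1: $f_1$ is Nash.} Since $f$ is $K$-holomorphic, it is $C^\infty$ in the real sense, and hence so is $f_1$; here I expect to use the Peterzil--Starchenko theory, possibly after restricting to small definable neighbourhoods. A $C^\infty$ semialgebraic function over an arbitrary real closed field is Nash \cite[\S8]{BCR98}, and Nash functions are $R$-analytic.

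\emph{Step 2: semialgebraic complexification and a uniform local formula.} By Theorem \ref{main}, $f$ is already locally definable. To obtain semialgebraicity, I will apply Kaiser's result \cite[Thm.B]{Kai16b} to $f_1$. This gives a semialgebraic open neighbourhood $\Omega$ of $U$ in $K^{2n}$, where $U\subset R^{2n}$ is viewed inside $K^{2n}$, and a semialgebraic $K$-holomorphic map $F:\Omega\to K$ extending $f_1$. Set
$$G(z,w):=2F\Big(\tfrac{z+w}{2},\tfrac{z-w}{2i}\Big),$$
which is semialgebraic and $K$-holomorphic on a semialgebraic open subset of $K^n\times K^n$ containing $\{(a,\bar a):a\in U\}$. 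The classical identity $f(z)=G(z,\bar a)-\overline{f(a)}$, valid near $z=a$, should carry over verbatim, because both sides are definable $K$-holomorphic functions near $a$ with the same real part and the same value at $a$. Consequently, for each $a\in U$ there is a semialgebraic neighbourhood $V_a$ on which
$$f(z)-f(a)=G(z,\bar a)-G(a,\bar a).$$
The right-hand side is a semialgebraic family of functions indexed by $a$. It follows that the Wirtinger derivatives $\partial f/\partial z_j=2\,\partial f_1/\partial z_j$ are globally semialgebraic on $U$.

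\emph{Step 3: globalisation, the main obstacle.} Example \ref{ex} shows that a semialgebraic derivative alone does not suffice, so the argument must use algebraicity. Each local germ $f|_{V_a}$ is Nash and arises from the fixed semialgebraic $G$. I will therefore look for a single nonzero polynomial $P(z,\bar z,t)$ such that $P(z,\bar z,f(z))=0$ on every $V_a$. A natural candidate comes from eliminating $a$ from the relation above, using the polynomial relation satisfied by the Nash function $G$ together with the fact that $f(a)$ is itself algebraic over the values $G(a,\bar a)$. By the identity principle for definable $K$-holomorphic functions on the connected set $U$, such a $P$ would hold everywhere. Then the graph of $f$ lies in a semialgebraic set $Z=\{P=0\}$ whose fibres over $U$ are finite and of bounded cardinality.

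It remains to see that a continuous section of $Z\to U$ is semialgebraic. For this I will take a semialgebraic cell decomposition of $Z$ compatible with the projection to $U$. Over each cell $C$, the set $Z$ splits into finitely many continuous semialgebraic branches $\xi_1<\dots<\xi_k$, ordered lexicographically on $R^2$, and the subsets $\{x\in C : f(x)=\xi_j(x)\}$ are closed in $C$ and cover it. The difficulty is that connectedness over a general $R$ is only available for definable sets, and $f$ is not yet known to be definable. I would overcome this by using local definability: near each point, $f$ coincides with some $\xi_j$ on a definable open set. Hence the subsets $\{f=\xi_j\}\cap C$ are also open in $C$, so $f$ coincides with a single branch on each definably connected cell. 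Gluing finitely many such branches shows that $f$ is semialgebraic.
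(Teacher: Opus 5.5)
\textbf{There is a genuine gap in Step 3.} It sits exactly where you flagged the difficulty, and your fix does not close it. Local semialgebraicity of $f$ does make each $A_j:=\{x\in C: f(x)=\xi_j(x)\}$ open and closed in $C$. But definable connectedness of $C$ only excludes partitions of $C$ into \emph{definable} clopen sets, and $A_j$ is definable only if $f$ is, which is the conclusion you are after.

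The same circularity appears in two other places:
\begin{itemize}
\item your appeal to the identity principle to spread $P(z,\bar z,f(z))=0$ from one $V_a$ to all of $U$;
\item Step 1, since the Peterzil--Starchenko smoothness theorem is for definable $f$.
\end{itemize}
Separately, $z\mapsto P(z,\bar z,f(z))$ is not $K$-holomorphic. You would have to polarize to $(z,w)\mapsto P(z,w,f(z))$ and use Lemma \ref{principioidentita}, or take $P\in K[z,t]$ as in Lemma \ref{polinomio}; the ``elimination of $a$'' is not needed.

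For $R\neq\R$ the gap cannot be repaired at all, because the statement fails. Suppose $R$ is non-archimedean, and let $J:=\{z\in K: |z|<1/m \text{ for every integer } m\geq 1\}$. This set is open and closed in $K$, so $f:=i\cdot\mathbf{1}_J$ is locally constant, hence $K$-holomorphic, with $f_1=0$. It is locally semialgebraic, satisfies $f(f-i)=0$, and passes your Steps 1--2. Yet on the single cell $C=K$, with branches $\xi_1=0$ and $\xi_2=i$, one gets $A_1=K\setminus J$ and $A_2=J$, and $f$ is not semialgebraic. For archimedean $R\subsetneq\R$, use $\{z: |z|<r\}$ with $r\in\R\setminus R$ instead.

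\textbf{What this means for the statement and for the paper.} The statement needs an extra hypothesis: $R=\R$, or $f$ definable in some o-minimal expansion of $R$. The paper's proof tacitly uses the same assumption at two points:
\begin{itemize}
\item It applies [PS03, Thm.~2.13(2)] to $z\mapsto P(z,f(z))$ on all of $U$. In the example, $f=i$ on every ball $B\subset J$, and $P(z,t)=t-i$ vanishes along $f$ on $B$ but not on $K$.
\item It asserts that the graph of $f|_{U'}$ is a semialgebraically connected component of $\pi^{-1}(U')\cap X$ because $f$ is continuous.
\end{itemize}

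\textbf{How your route compares under the extra hypothesis.} Your outline then goes through, since $C$ is connected when $R=\R$, and the $A_j$ are definable when $f$ is.

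Your Step 2 is the paper's Cartan argument in other coordinates. The identity $G(z,w)=f(z)+\overline{f(\bar w)}$ is exactly $F=g$ from the proof of Theorem \ref{main}, after the substitution $(z,w)\mapsto\big(\tfrac{z+w}{2},\tfrac{z-w}{2i}\big)$. It follows from Lemma \ref{principioidentita} on the totally real set $\{w=\bar z\}$, not from comparing real parts, and it needs only local complexifications of $f_1$.

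The genuine difference is the globalization. The paper uses the discriminant locus, Lemma \ref{connes} and a Nash-covering argument. You instead use a cell decomposition of $Z$ over the dense open set where its fibres are finite, followed by taking the closure of the graph. This makes Lemma \ref{connes} unnecessary.
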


Let $U$ be a subset of $R^n$. A function $f:U\to R$ is called a \textit{Nash function} if it is semialgebraic and infinitely often differentiable. When $R=\R$, then Nash functions are precisely those functions which are semialgebraic and real analytic \cite[Ch.8]{BCR98}. If $f:U\to K$ is a $K$-holomorphic function, then it is infinitely often differentiable \cite[Thm.2.12]{PS03}. In particular, Theorem \ref{Nash} ensures that if the real part $f_1$ is a Nash function, then the imaginary part $f_2$ is Nash as well. 

We conclude this section by noting the following:

\begin{remark}
As $f=f_1+if_2$ is a $K$-holomorphic function if and only if $-if=f_2-if_1$ is a $K$-holomorphic function, then all our results hold true (as expected) also if we replace the real part $f_1$ by the imaginary part $f_2$. \hfill$\sqbullet$
\end{remark}

\vspace{1em}

\section{Preliminaries}

\subsection{$K$-holomorphic and (strongly) $R$-analytic functions}\label{Ranalytic}

In this section we recall the definitions of $K$-holomorphic functions and (strongly) $R$-analytic functions. We refer the reader to \cite{PS01,PS03} and \cite{Kai16b} for further information and details. First, we recall the definition of $K$-holomorphic functions in one variable \cite[Def.2.26]{PS01}. 

\begin{defn}[$K$-holomorphic function - one variable]
Let $U$ be an open subset of $K$ and $z_0\in U$. A function $f:U\to K$ is \textit{$K$-holomorphic at $z_0$} if
$$
\lim_{h\to 0}\frac{f(z_0+h)-f(z_0)}{h}
$$
exists in $K$ (where the limit is {considered} with respect to the the Euclidean topology of $K\equiv R^2$). The function $f$ is \textit{$K$-holomorphic on} $U$ if it is $K$-holomorphic at every point of $U$. \hfill$\sqbullet$ 
\end{defn}

In analogy with the complex case, Peterzil and Starchenko \cite[Def.2.8]{PS03} have defined $K$-holomorphic functions of several variables as those continuous functions which are separately $K$-holomorphic in each variable. 

\begin{defn}[$K$-holomorphic function - several variables]
Let $U$ be an open subset of $K^n$. A function $f:U\to K$ is \textit{$K$-holomorphic on} $U$ if it is continuous on $U$ and for each $(z_1,\ldots,z_n)\in U$ and $j=1,\ldots,n$ the function 
$$
\{z\in K : (z_1,\ldots,z_{j-1},z,z_{j+1},\ldots,z_n)\in U\}\to K, \quad z\mapsto f(z_1,\ldots,z_{j-1},z,z_{j+1},\ldots,z_n)
$$
is $K$-holomorphic at $z_j$. \hfill$\sqbullet$ 
\end{defn}

In what follows we will need the following lemma:

\begin{lem}\label{principioidentita}
Let $U$ be a definably connected subset of $K^n$ and $f:U\to K$  a definable $K$-holomorphic function. If there exists an open subset $V$ of $R^n\times\{0\}$ such that $f(z)=0$ for each $z\in V$, then $f(z)=0$ for each $z\in U$. 
\end{lem}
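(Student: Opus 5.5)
We plan to derive Lemma \ref{principioidentita} from the identity principle for definable $K$-holomorphic functions of Peterzil and Starchenko \cite{PS03}. That principle says: if $U\subset K^n$ is definably connected and open, and $f$ vanishes on a non-empty open subset of $U$, then $f\equiv 0$ on $U$. (We read ``definably connected subset'' in the statement as implicitly open, which is what $K$-holomorphy requires.) The whole task is therefore to pass from vanishing on the real open set $V\subset R^n\times\{0\}$ to vanishing on an open subset of $K^n$.

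\textbf{Step 1: reduce to a small definable box.} Fix a point $a\in V$, which we view as a point of $R^n\subset K^n$. Choose $\varepsilon>0$ in $R$ such that the polydisc
$$
\Delta:=\{z\in K^n : |z_j-a_j|<\varepsilon,\ j=1,\dots,n\}
$$
is contained in $U$, and its real trace $\Delta\cap R^n$ is contained in $V$. Here we use that $V$ is open in $R^n$. The polydisc $\Delta$ is definable, open and definably connected.

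\textbf{Step 2: show $f\equiv 0$ on $\Delta$, by induction on the variables.}

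First fix real values $x_2,\dots,x_n$ with $|x_j-a_j|<\varepsilon$. The one-variable function $z_1\mapsto f(z_1,x_2,\dots,x_n)$ is definable and $K$-holomorphic on the disc $|z_1-a_1|<\varepsilon$. It vanishes on the real interval $(a_1-\varepsilon,a_1+\varepsilon)$, which is infinite and has accumulation points in the disc. By the one-variable identity principle for definable $K$-holomorphic functions \cite{PS01}, it vanishes on the whole disc. This uses the fact that the zero set of a non-zero such function is discrete, hence finite on a definable relatively compact set.

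We then repeat the argument in the second variable. For fixed $z_1$ in the disc and real $x_3,\dots,x_n$, the function $z_2\mapsto f(z_1,z_2,x_3,\dots,x_n)$ vanishes on a real interval, so it vanishes on the whole disc in $z_2$. Iterating through all $n$ variables gives $f\equiv 0$ on $\Delta$. Each slice function is definable because $f$ is, and it is $K$-holomorphic by the definition of separate holomorphy.

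\textbf{Step 3: conclude.} Since $\Delta$ is a non-empty open subset of $U$ on which $f$ vanishes, the several-variable identity theorem \cite[Thm.2.13]{PS03} applied on the definably connected set $U$ gives $f\equiv0$ on $U$.

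If one prefers a self-contained argument for Step 3, consider the set $Z$ of points of $U$ near which $f$ vanishes identically. It is definable and open, and it is non-empty since it contains $\Delta$. It is also closed in $U$, because the vanishing of all derivatives is a closed condition and, by the Taylor expansion in \cite{PS03}, forces local vanishing. Definable connectedness of $U$ then gives $Z=U$.

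\textbf{Main obstacle.} The delicate point is the one-variable zero-set statement used in Step 2: a non-zero definable $K$-holomorphic function cannot vanish on an infinite set with an accumulation point inside its domain. Over a non-Archimedean $R$ this cannot be proved by power series, so we will cite it directly from \cite{PS01}, where it is obtained from o-minimality together with the definable Cauchy theory.
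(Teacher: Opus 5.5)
Your proof is correct, but it takes a different route from the paper's.

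The paper argues infinitesimally at a single point. For $z\in V$, $f$ vanishes on the real open set $V$. By the Cauchy--Riemann equations \cite[Fact2.10]{PS03}, the complex partial derivatives agree with the real ones along $R^n$. Hence all partial derivatives of $f$ vanish at $z$. The paper then invokes \cite[Thm.2.13(2)]{PS03} in the form ``all derivatives vanish at a point of a definably connected domain'' to get $f\equiv0$. Getting vanishing to all orders implicitly uses that the partial derivatives are again $K$-holomorphic, so that Cauchy--Riemann can be iterated.

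You instead work on an open set. Your slice-by-slice induction uses the one-variable isolated-zeros principle of \cite{PS01}: a non-identically-zero definable $K$-holomorphic function on a disc has a discrete, hence finite, zero set, so it cannot vanish on a real interval. This upgrades vanishing on a real box to vanishing on a full polydisc $\Delta$. You then apply the identity theorem in its ``vanishes on a non-empty open subset'' form, which the paper itself uses in the proof of Theorem \ref{Nash}.

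What each route buys: the paper's is shorter and needs only one point of $V$, but it relies on differentiating to all orders. Yours avoids derivatives entirely, needing only one-variable zero-set theory plus the open-set identity theorem. It costs a little more bookkeeping, but gives the explicit local conclusion $f|_{\Delta}\equiv0$.

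Your optional ``self-contained'' version of Step 3 is not really self-contained. It relies on the principle that vanishing of all derivatives at a point forces local vanishing, which is exactly the fact the paper cites. Your readings that $U$ is open and $V\subset U$ is non-empty are also implicit in the paper's statement.
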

\begin{proof}
Let $z\in V$. Using the Cauchy-Riemann equations \cite[Fact2.10]{PS03} we find that all the partial derivatives of $f$ are zero at $z$. As $U$ is definably connected, by \cite[Thm.2.13(2)]{PS03}, we conclude that $f$ is identically zero on $U$, as required.
\end{proof}
 
Next, we recall the definition of $R$-analytic functions \cite[Def.2.1]{Kai16b} and strongly $R$-analytic functions \cite[Def.2.15]{Kai16b}.

\begin{defn}[$R$-analytic function]
Let $U$ be an open subset of $R^n$ and $f:U\to R$ a definable function. Let $x_0\in U$. The function $f$ is \textit{$R$-analytic at $x_0$} {if it admits a local definable} complexification. That is, if there exists an open neighbourhood $U_{x_0}$ of $x_0$ in $U$, an open subset $V_{x_0}$ of $K^n$ such that $U_{x_0}\subset V_{x_0}$ and a definable $K$-holomorphic function $F:V_{x_0}\to K$ such that $F|_{U_{x_0}}=f|_{U_{x_0}}$. The function $f$ is \textit{$R$-analytic on $U$} if it is $R$-analytic at every point of $U$. \hfill$\sqbullet$ 
\end{defn}

Strongly $R$-analytic functions are the global counterpart of $R$-analytic functions.

\begin{defn}[Strongly $R$-analytic function]\label{strong}
Let $U$ be an open subset of $R^n$ and $f:U\to R$ a definable function. The function $f$ is \textit{strongly $R$-analytic} if it admits a definable global complexification. That is, if there exists an open subset $V$ of $K^n$ such that $U\subset V$ and a definable $K$-holomorphic function $F:V\to K$ such that $F|_U=f$. \hfill$\sqbullet$ 
\end{defn}

\subsection{Cartan's calculation}
In this section, we recall an elementary, but brilliant, calculation of Cartan \cite[\S IV.3.5]{Car63} {which allows one to reconstruct a holomorphic function from its real part} in a definable way, that is, without integration.

Let $U\subset K^n$ be an open subset such that $0\in U$ and $f:=f_1+if_2:U\to K$ any function such that $f(0)=0$. Let $z:=(z_1,\ldots,z_n)$ and $w:=(w_1,\ldots,w_n)$. Consider the open set $V\subset K^{2n}$ defined as
\begin{equation}\label{aperto}
V:=\{(z,w)\in K^{2n} : z+iw\in U \text{\ and\ } \overline{z}+i\overline{w}\in U\}.
\end{equation}
Observe that $\big(\tfrac{z}{2},\tfrac{z}{2i}\big)\in V$ if and only if $z\in U$. Moreover, $V\cap R^{2n}=\{(x,y)\in R^{2n} : x+iy\in U\}$. Consider the function 
\begin{equation}\label{g}
g:V\to K, \quad (z,w)\mapsto \frac{1}{2}\Big(f(z+iw)+\overline{f(\overline{z}+i\overline{w})}\Big).
\end{equation}
As $f(0)=0$, by the definition of $g$, we deduce that 
\begin{equation}\label{uguaglianza}
f(z)=2g\Big(\frac{z}{2},\frac{z}{2i}\Big)
\end{equation}
for each $z\in U$. Moreover,
\begin{equation}\label{partereale}
f_1(x+iy)=\frac{1}{2}\Big(f(x+iy)+\overline{f(x+iy)}\Big)=g(x,y)
\end{equation}
for each $(x,y)\in V\cap R^{2n}$.

{We end this section by deducing additional regularity of $g$ induced by regularity of $f$:}

\begin{lem}\label{gholo}
It holds:
\begin{enumerate}[label=\emph{(\roman*)}, ref=(\roman*)]
\item\label{gholo:1} If the function $f$ is definable, then the function $g$ is definable.
\item\label{gholo:2}  If the function $f$ is $K$-holomorphic on $U$, then the function $g$ is $K$-holomorphic on $V$.
\end{enumerate}
\end{lem}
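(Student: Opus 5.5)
For part \ref{gholo:1}, I would observe that $g$ is built from $f$ by operations that are all semialgebraic, hence definable in any o-minimal structure. Identify $K^{2n}$ with $R^{4n}$. The maps $(z,w)\mapsto z+iw$ and $(z,w)\mapsto\overline{z}+i\overline{w}$ are $R$-linear, and complex conjugation $K\to K$ is $R$-linear as well. The open set $V$ in \eqref{aperto} is the intersection of the preimages of the definable set $U$ under these two linear maps, so $V$ is definable. The graph of $g$ is then the projection of a definable set. Concretely, $(z,w,t)\in\Gamma_g$ if and only if there exist $a,b\in K$ with $(z+iw,a)\in\Gamma_f$, $(\overline{z}+i\overline{w},b)\in\Gamma_f$ and $t=\tfrac12(a+\overline b)$. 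Since definable sets are closed under products, intersections and projections, this gives \ref{gholo:1}.

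For part \ref{gholo:2}, continuity of $g$ on $V$ is immediate, since $g$ is a composition of $f$ (continuous by definition of $K$-holomorphy) with continuous $R$-linear maps and conjugation. It remains to check separate $K$-holomorphy in each variable $z_j$ and $w_j$. Fix all variables except $z_j$. The first summand $z_j\mapsto f(z+iw)$ is $f$ composed with the translation $z_j\mapsto z_j+iw_j$ in the $j$-th slot. Its difference quotient equals that of $f$ in its $j$-th variable, so the limit exists and equals $\partial f/\partial z_j(z+iw)$.

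For the second summand, set $\phi(z_j):=\overline{f(\ldots,\overline{z_j}+i\overline{w_j},\ldots)}$. Then
$$\frac{\phi(z_j+h)-\phi(z_j)}{h}=\overline{\left(\frac{f(\ldots,\zeta+\overline h,\ldots)-f(\ldots,\zeta,\ldots)}{\overline h}\right)},\qquad \zeta:=\overline{z_j}+i\overline{w_j}.$$
As $h\to0$ we have $\overline h\to0$, and conjugation is a homeomorphism of $K$. So the limit exists and equals $\overline{\partial f/\partial z_j(\overline z+i\overline w)}$.

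The $w_j$ variable is handled the same way. For the first summand, $w_j\mapsto z_j+iw_j$ gives the factor $i$. For the second, $w_j\mapsto\overline{z_j}+i\overline{w_j}$: with $k:=\overline{h}$ the increment of the argument is $i\overline h=ik$, and
$$\frac{\phi(w_j+h)-\phi(w_j)}{h}=\overline{\left(\frac{f(\ldots,\zeta+ik,\ldots)-f(\ldots,\zeta,\ldots)}{k}\right)}\longrightarrow\overline{i\,\partial f/\partial z_j}.$$

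The only point requiring care is the second summand, where $g$ combines $f$ with two conjugations. The key step is that the two conjugations cancel in the difference quotient, and that the limit with respect to the Euclidean topology is preserved under conjugation, which is an $R$-linear isometry. No definability hypothesis is needed for \ref{gholo:2}, since the definition of $K$-holomorphy is purely topological and algebraic.
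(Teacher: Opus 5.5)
Your proof is correct. Part (i) is the paper's argument (definability of $V$ and of conjugation), just written out through the graph of $g$.

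Part (ii) takes a more elementary route than the paper.

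\emph{The paper's route.} It asserts that $g$ is $R$-differentiable as a composition of $R$-differentiable maps. It then invokes the criterion [PS01, Fact 2.27]: an $R$-differentiable map is $K$-holomorphic once the Cauchy--Riemann equations hold. Finally it checks $\partial g/\partial\bar z_j=\partial g/\partial\bar w_j=0$ by a Wirtinger-type computation, in which the two conjugations turn $\partial/\partial\bar z_j$ into $\partial/\partial z_j$.

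\emph{Your route.} You verify the definition of $K$-holomorphy directly. Continuity of $g$ comes from continuity of $f$. Each one-variable complex derivative exists by the identity $\bar N/h=\overline{N/\bar h}$ together with the fact that conjugation is an isometry of $K$. The underlying mechanism, the cancellation of the two conjugations, is the same in both.

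\emph{What each buys.} Your version uses only continuity and separate $K$-differentiability of $f$, which is exactly what the definition provides. It also gives explicit formulas for $\partial g/\partial z_j$ and $\partial g/\partial w_j$. This is a real advantage here. In the proof of Theorem~\ref{main}, part~\ref{gholo:2} is applied to an $f$ that is not assumed definable. For such $f$ in several variables, joint $R$-differentiability is an Osgood-type statement that the definable theory of Peterzil--Starchenko does not supply. The paper's sentence could be repaired by working one variable at a time, which is essentially what you do. The paper's version is shorter and fits the Cauchy--Riemann framework it uses elsewhere, e.g.\ in Lemma~\ref{principioidentita}.
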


\begin{proof}
Property \ref{gholo:1} follows directly by the definability {of the set $V$ introduced in \eqref{aperto}} and of the complex conjugation. Next, we show property \ref{gholo:2}. Clearly, $g$ is $R$-differentiable (in the sense of \cite[Ch.7]{vdD98}) as a composition of $R$-differentiable functions. Thus, by \cite[Fact.2.27]{PS01}, we only need to check that it satisfies the Cauchy-Riemann equations on $V$. Let $j\in\{1,\dots,n\}$. As the function $(z,w)\mapsto f(z+iw)$ is $K$-holomorphic, we deduce that
\begin{align*}
\frac{\partial g(z,w)}{\partial \bar{z}_j} &=\frac{1}{2}\Big(\frac{\partial f(z+iw)}{\partial \bar{z}_j} +\frac{\partial \overline{f(\overline{z}+i\overline{w})}}{\partial \bar{z}_j}\Big)=\frac{1}{2}\frac{\partial \overline{f(\overline{z}+i\overline{w})}}{\partial \bar{z}_j}\\
&= \frac{1}{2}\frac{\partial \overline{f(\overline{z}_1+i \overline{w}_1,\ldots,z_j+i \overline{w}_j,\ldots,\overline{z}_n+i\overline{w}_n)}}{\partial z_j}\\
&=\frac{1}{2}\overline{\Big(\frac{\partial f(\overline{z}_1+i \overline{w}_1,\ldots,z_j+i \overline{w}_j,\ldots,\overline{z}_n+i\overline{w}_n)}{\partial \overline{z}_j}\Big)}=0
\end{align*}
for each $(z,w):=(z_1,\ldots,z_n,w_1,\ldots w_n)\in V$. A similar computation shows that $g$ satisfies the Cauchy-Riemann equations with respect to the variables $w_1,\ldots,w_n$. We conclude that $g$ is $K$-holomorphic on $V$, as required.
\end{proof}

\vspace{1em}

\section{Proofs of the results}

In this section we show Theorems \ref{prop1}, \ref{main} and \ref{Nash}. First, we show Theorem \ref{prop1}.

\begin{proof}[Proof of Theorem \ref{prop1}]
We may assume that $0\in U$ and $f(0)=0$. Let $V$ be the open set introduced in \eqref{aperto} and $g:V\to K$ the function introduced in \eqref{g}. As $f$ is definable and $K$-holomorphic on $U$, then, by Lemma \ref{gholo}, $g$ is definable and $K$-holomorphic on $V$. Thus, $f_1$ is definable, because $f_1(x+iy)=g(x,y)$ for each $(x,y)\in V\cap R^2=\{(x,y)\in R^2 : x+iy\in U\}$. Moreover, $f_1$ is strongly $R$-analytic, as $g$ is a definable $K$-holomorphic extension of $f_1$ to the open definable set $V\subset K^{2n}$, as required.
\end{proof}

Then, we show Theorem \ref{main}. 

\begin{proof}[Proof of Theorem \ref{main}]
{We are going to show the local definability of $f$ at $x\in U$. Again, by a translation argument, we may suppose that $x=0\in U$ and $f(0)=0$. Up to substitute $U$ with an open definable neighbourhood of $0$ in $U$ where $f_1$ is definable, we may assume in what follows that \textit{$f_1$ is definable.}  Thus, we are only left to show that}: \textit{There exists an open neighbourhood $W$ of $0$ in $K^n$ such that the restriction $f|_{W}$ is definable.} 

Let $V$ be the open set introduced in \eqref{aperto} and $g:V\to K$ the function introduced in \eqref{g}. By Lemma \ref{gholo}\ref{gholo:2}, the function $g$ is $K$-holomorphic on $V$. As $f_1$ is definable and $R$-analytic, then there exist
\begin{itemize}
\item an open neighbourhood $U_0$ of $0$ in $U$,
\item an open neighbourhood $V_0$ of $0$ in $K^{2n}$ such that $U_0\subset V_0$,
\item a definable $K$-holomorphic function $F:V_0\to K$ such that $F|_{U_0}=f_1|_{U_0}$.
\end{itemize}
Observe that $V_0\cap V$ is an open {and definable} neighbourhood of $0$ in $K^{2n}$. {By \cite[Prop.2.18]{vdD98} $V_0\cap V$ is uniquely decomposed in a finite number of definably connected components and each of them is both open and closed in $V_0\cap V$. Thus, let $V'_0\subset K^{2n}$ be the definably connected component of $V_0\cap V$ containing $0$, which is still open in $K^{2n}$.} 
By \eqref{partereale}, we have $g(x,y)=f_1(x+iy)=F(x,y)$ for each $(x,y)\in V'_0\cap R^2$. As $g$ is $K$-holomorphic on $V'_0$ and $V'_0$ definably connected, by Lemma \ref{principioidentita}, we have that $g(z,w)=F(z,w)$ for each $(z,w)\in V'_0$. Thus, the restriction $g|_{V'_0}$ is definable, as $F$ is definable on $V_0$, so its restriction to the definable set $V'_0$ is still definable. The set $W:=\{z\in K^n : \big(\tfrac{z}{2},\tfrac{z}{2i}\big)\in V'_0\}$ is an open definable neighbourhood of $0$ in $U$. By \eqref{uguaglianza}, we conclude that $f|_W$ is definable, as required.
\end{proof}

We are left to show Theorem \ref{Nash}, but first, we need some preparation. We start with the following lemma, which is the analogous of \cite[Prop.2.4]{Car25} in this more general context. Although the proof is essentially the same as the one given in \cite[Prop.2.4]{Car25}, we include it here, with all the details on how to extend the arguments, for the reader’s convenience.

\begin{lem}\label{polinomio}
Let $U\subset K^n$ be an open subset and $f:U\to K$ a semialgebraic $K$-holomorphic function. Then, there exists a non-zero polynomial $P\in K[\z,\t]:=K[\z_1,\ldots,\z_n,\t]$ such that $P(z,f(z))=0$ for each $z\in U$.
\end{lem}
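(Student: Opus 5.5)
We will use the fact that a semialgebraic graph has dimension at most that of its domain, so it lies in a proper algebraic subset of $K^n\times K$; the difficulty is to obtain a \emph{$K$-polynomial} rather than only real polynomial relations.

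\emph{Step 1: real polynomials vanishing on the graph.} Identify $K^n\times K$ with $R^{2n}\times R^2$ and regard the graph $\Gamma_f\subset R^{2n+2}$. It is a semialgebraic set of dimension $2n$, so its Zariski closure $Z\subset R^{2n+2}$ has dimension $2n$ (\cite[Prop.2.8.2]{BCR98}). Hence the ideal $I(Z)\subset R[x,y,u,v]$ is nonzero, where $z=x+iy$ and $t=u+iv$. Any polynomial in $I(Z)$ is a real polynomial in the real and imaginary parts, however, not a polynomial in $\z,\t$.

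\emph{Step 2: pass to a complex-algebraic relation.} Rewrite the variables as $x=(\z+\overline{\z})/2$, $y=(\z-\overline{\z})/(2i)$, and similarly for $\t$. Then $Z$ becomes a real-algebraic set of real dimension $2n$ in $K^{n+1}$. The goal is to show that the smallest $K$-algebraic subset $Y\subset K^{n+1}$ containing $\Gamma_f$ is proper. Suppose instead that $\Gamma_f$ is Zariski dense in $K^{n+1}$ in the $K$-sense. Consider the polynomials $Q(\z,\overline{\z},\t,\overline{\t})\in K[\z,\overline{\z},\t,\overline{\t}]$ vanishing on $\Gamma_f$ (these are complexified elements of $I(Z)$). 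Since $f$ is $K$-holomorphic, we can use the map $g$ from Cartan's calculation, or directly the holomorphy: the function $Q(z,\bar z,f(z),\overline{f(z)})=0$ on $U$. Apply $\partial/\partial\bar z_j$ and use the Cauchy--Riemann equations \cite[Fact2.10]{PS03}. Then work with the polarized function $(z,w)\mapsto Q(z,w,f(z),\overline{f(\bar w)})$, which is $K$-holomorphic on $\{(z,w): z\in U,\ \bar w\in U\}$ by the same argument as in Lemma \ref{gholo}\ref{gholo:2}. It vanishes on the totally real set $\{w=\bar z\}$, and hence vanishes identically on the definably connected component by Lemma \ref{principioidentita} (after a linear change of coordinates taking $\{w=\bar z\}$ to $R^{2n}\times\{0\}$).

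\emph{Step 3: eliminate the antiholomorphic variables.} Fix a point $w_0$ with $\bar w_0\in U$ such that the specialization $P(\z,\t):=Q(\z,w_0,\t,\overline{f(\bar w_0)})$ is a nonzero polynomial. Then $P(z,f(z))=0$ for $z$ in a neighbourhood of $\bar w_0$. Finally, extend the identity to all of $U$. Work on each definably connected component of $U$ (finitely many, by \cite[Prop.2.18]{vdD98}). The function $z\mapsto P(z,f(z))$ is definable and $K$-holomorphic there, and it vanishes on an open set, so it vanishes on the whole component by the identity theorem \cite[Thm.2.13(2)]{PS03}. Multiplying the polynomials obtained for the finitely many components gives a single nonzero $P$.

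\emph{Main obstacle.} The delicate point is guaranteeing that some specialization in Step 3 is a nonzero polynomial in $(\z,\t)$. This can fail if $Q$ happens to be divisible by factors vanishing at $(w_0,\overline{f(\bar w_0)})$. I would first try to handle it by choosing $Q$ irreducible and, writing $Q=\sum_\alpha c_\alpha(\z,\t)\,m_\alpha(\overline{\z},\overline{\t})$, choosing the point $(w_0,\overline{f(\bar w_0)})$ off the proper algebraic set where all $m_\alpha$-combinations vanish simultaneously. A point avoiding this set should exist because the graph of $\bar f$ is not contained in a proper algebraic subset of its fibres; this genericity argument is where I expect the real work to lie.
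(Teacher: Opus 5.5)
Your route is different from the paper's: you polarize the real relation $Q(z,\bar z,f(z),\overline{f(z)})=0$ to $Q(z,w,f(z),\overline{f(\bar w)})=0$ using Lemma~\ref{principioidentita}, then freeze the antiholomorphic slots. Everything up to the specialization is sound; the asides in Step 2 about Zariski density and $\partial/\partial\bar z_j$ are never used.

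The step you flag, however, is a genuine gap, and the heuristic you give for it points the wrong way. The graph of $w\mapsto\overline{f(\bar w)}$ \emph{is} contained in a proper $K$-algebraic set: this is the lemma itself, applied to that semialgebraic $K$-holomorphic function. The locus where all specializations vanish can contain this graph entirely. Take $n=1$, $f(z)=z$, and the real relation $(u-x)^2+(v-y)^2=0$ on $\Gamma_f$. In variables $(\z,\w,\t,\mathtt{s})$ standing for $(z,\bar z,t,\bar t)$ it becomes $Q=(\t-\z)(\mathtt{s}-\w)$. Since $\overline{f(\bar w_0)}=w_0$, every specialization $Q(\z,w_0,\t,w_0)$ is the zero polynomial. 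Passing to an irreducible factor does not help by itself: $\mathtt{s}-\w$ is irreducible and vanishes on the polarized graph, yet all its specializations are zero.

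The missing idea is a dichotomy that makes genericity unnecessary. Expand $Q=\sum_\beta d_\beta(\w,\mathtt{s})\,m_\beta(\z,\t)$ with distinct monomials $m_\beta$ and $d_\beta\in K[\w,\mathtt{s}]$, not all zero.
\begin{itemize}
\item If $d_\beta(w_0,\overline{f(\bar w_0)})\neq 0$ for some $\beta$ and some $w_0$ with $\bar w_0\in U$, your specialization is non-zero and you conclude as planned.
\item Otherwise all $d_\beta$ vanish on $\{(w,\overline{f(\bar w)}):\bar w\in U\}$. Pick $d_\beta\neq 0$ and let $d_\beta^{c}$ be the polynomial with conjugated coefficients. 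Conjugating gives $d_\beta^{c}(z,f(z))=0$ for all $z\in U$, with $d_\beta^{c}\neq 0$.
\end{itemize}
Your irreducibility idea can also be made to work, but only for the right reason. For irreducible $Q\notin K[\w,\mathtt{s}]$ the $d_\beta$ are coprime, so their common zero set has real dimension at most $2n-2$ and cannot contain a $2n$-dimensional graph. The case $Q\in K[\w,\mathtt{s}]$ is handled by conjugation as above.

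With this repair your proof is complete and gives a genuine alternative. The paper sidesteps the issue by specializing only in the \emph{free} variable $y$. It chooses a real slice $y=y_0$ where the real relations $P_1,P_2$ for $f_1,f_2$ stay non-zero; since $y_0$ ranges over an open set, this is automatic. It then complexifies them into relations for $f_1(x)$ and $if_2(x)$, fuses these with a resultant into one relation for $f=f_1+if_2$ on $U\cap R^n$, and propagates it with Lemma~\ref{principioidentita}. In your approach the specialization point is forced to lie on the graph of $w\mapsto\overline{f(\bar w)}$, which is exactly why a genericity argument cannot work there.
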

\begin{proof}
Let $x:=(x_1,\ldots,x_n)$, $y:=(y_1,\ldots,y_n)$ and $z:=(z_1,\ldots,z_n)$. We identify $K^n$ with $R^{2n}$ by setting $z:=x+iy$ and we regard $R^n$ as the subset $R^n\times\{0\}$. We may write $f=f_1+if_2$, where $f_1,f_2:U\to R$. As $U$ is semialgebraic, it has finitely many semialgebraically connected components $U_1,\ldots,U_s$ \cite[Thm.2.4.4]{BCR98}. Assume that for each $j=1,\ldots,s$ there exists a non-zero polynomial $P_j\in K[\z,\t]$ such that $P_j(z,f(z))=0$ for each $z\in U_j$. Then, the non-zero polynomial $P:=\prod_{j=1}^s P_j$ satisfies $P(z,f(z))=0$ for each $z\in U$. In particular, we may assume: \textit{$U$ is semialgebraically connected}.

As $f$ is semialgebraic, then $f_1$ and $f_2$ are semialgebraic functions. Thus, there exist two non-zero polynomials $P_1,P_2\in R[\x,\y,\t]:=R[\x_1,\ldots,\x_n,\y_1,\ldots,\y_n,\t]$ such that
$$
P_1(x,y,f_1(x+iy))=P_2(x,y,f_2(x+iy))=0
$$
for each $x+iy\in U$. By the identity principle for polynomials, the set 
$$
\{y_0\in R^n :  \, P_1(\x,y_0,\t)=  0 \, \,  \text{or} \, \, P_2(\x,y_0,\t)= 0\}
$$ 
is nowhere dense in $R^n$ with respect to the Euclidean topology. Up to a translation, we may assume that $U\cap R^n$ is an open neighbourhood of 0 in $R^n$ and that the polynomials 
$$
Q_1(\z,\t):=P_1(\z,0,\t) \quad   \text{and} \quad  Q_2(\z,\t):=P_2(\z,0,-i\t)
$$
are not the zero polynomials in $K[\z,\t]$. By the fact that 
$
P_1(x,0,f_1(x))=P_2(x,0,f_2(x))=0
$
for each $x\in U\cap R^n$, we deduce that 
$$
Q_1(x,f_1(x))=P_1(x,0,f_1(x))=0  \quad \text{and} \quad Q_2(x,i f_2(x))=P_2(x,0,f_2(x))=0
$$
for each $x\in U\cap R^n$. 

Let $R(\z,\t)\in K[\z,\t]$ be the resultant in $K[\z,\t,\w]$ with respect to the variable $\w$ of $Q_1(\z,\w)$ and $Q_2(\z,\t-\w)$. As $Q_1(\z,\w)$ and $Q_2(\z,\t-\w)$ have no common irreducible factors in $K[\z,\t,\w]$ of positive degree with respect to the variable $\w$, then by \cite[\S3.6, Prop.1(ii)]{CLO15} and its proof, $R(\z,\t)$ is not the zero polynomial and there exist two polynomials $A,B\in K[\z,\t,\w]$ such that
$$
R(\z,\t)=A(\z,\t,\w)Q_1(\z,\w)+B(\z,\t,\w)Q_2(\z,\t-\w).
$$
{By evaluating $R(\z,\t)\in K[\z,\t,\w]$ at $(x,f(x),f_1(x))$, for $x\in U\cap R^n$, we get:
\begin{equation}\label{Qbullet}
R(x,f(x))=A(x,f(x),f_1(x))Q_1(x,f_1(x))+B(x,f(x),f_1(x))Q_2(x,if_2(x))=0,
\end{equation}
as $Q_1(x,f_1(x))=Q_2(x,+if_2(x))=0$ for every $x\in U\cap R^n$.} The map $U\to K, \ z\mapsto R(z,f(z))$ is $K$-holomorphic on $U$, because it is a composition of $K$-holomorphic {functions}. As $U$ is semialgebraically connected, by \eqref{Qbullet} and Lemma \ref{principioidentita}, we conclude that $R(z,f(z))=0$ for each $z\in U$, as required. 
\end{proof}

Next, we show the following:

\begin{lem}\label{connes}
Let $U\subset R^n$ be an open semialgebraic subset which is, in addition, semialgebraically connected and $\Delta\subset U$ any semialgebraic subset. If the dimension of $\Delta$ is $\leq n-2$, then $U\setminus \Delta$ is semialgebraically connected.
\end{lem}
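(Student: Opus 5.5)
We argue by contradiction, through a semialgebraic decomposition of $U$. Suppose $U\setminus\Delta=A\cup B$ with $A,B$ disjoint, nonempty, semialgebraic, and both open and closed in $U\setminus\Delta$. We may also replace $\Delta$ by its closure in $U$: the closure of $\Delta$ is semialgebraic of the same dimension $\le n-2$ \cite[Prop.2.8.13]{BCR98}, and if $U\setminus\cl(\Delta)$ is semialgebraically connected then so is $U\setminus\Delta$. Indeed, $U\setminus\cl(\Delta)$ is dense in $U\setminus\Delta$, because $\cl(\Delta)$ has empty interior, and a set containing a dense connected subset is connected. So assume $\Delta$ is closed in $U$; then $A$ and $B$ are open in $U$.

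Next, take a semialgebraic stratification (cylindrical decomposition) of $U$ compatible with $\Delta$, $A$ and $B$, into finitely many semialgebraic cells, each a Nash manifold homeomorphic to an open cube \cite[Thm.2.3.1, Prop.9.1.8]{BCR98}. Let $\mathcal C_n$ be the open cells, i.e. those of dimension $n$, and $\mathcal C_{n-1}$ the cells of dimension $n-1$. Since $\dim\Delta\le n-2$, every cell in $\mathcal C_n\cup\mathcal C_{n-1}$ lies in $A$ or in $B$. Build a graph whose vertices are the $n$-cells, joining two $n$-cells when some $(n-1)$-cell lies in the closure of both (the frontier). We then claim two things. (a) This graph is connected. (b) Adjacent $n$-cells lie in the same one of $A$, $B$.

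Claim (b) should be easy. Let $D$ be an $(n-1)$-cell, say $D\subset A$. Since $A$ is open, a small ball around a point of $D$ lies in $A$. That ball meets both adjacent $n$-cells, so both meet $A$, and a cell meeting $A$ is contained in $A$.

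Claim (a) is the main obstacle. The plan is to use that $U$ is connected and that $U$ minus the union $S$ of cells of dimension $\le n-2$ is a topological $n$-manifold. The union of the $n$-cells and $(n-1)$-cells is $U\setminus S$. Around a point of an $(n-1)$-cell, the local structure of the decomposition shows that only finitely many $n$-cells are adjacent, and they are all linked through that $(n-1)$-cell. So the connected components of the graph give a partition of $U\setminus S$ into open, closed, semialgebraic pieces, and it suffices to show that $U\setminus S$ is semialgebraically connected. That is the original statement again, but now $S$ is a union of cells. I would handle it by induction on the codimension, or more concretely as follows. Given a semialgebraic path in $U$ joining two points of $U\setminus S$, perturb it so that it avoids $S$. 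By curve selection and the fact that a generic perturbation of a path in $R^n$ misses a set of dimension $\le n-2$, this is a dimension count: the set of translates $v$ such that the path plus $v$ meets $S$ has dimension $\le 1+(n-2)<n$. So one can choose a piecewise-linear path with a generic small shift, staying inside $U$ by an openness argument on a compact polygonal path. This transversality count is where the dimension hypothesis enters, and it needs the most care over a non-Archimedean $R$, where it must be phrased with semialgebraic dimension rather than measure.

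Combining (a) and (b), all $n$-cells lie in one of $A$, $B$, say $A$. Every $(n-1)$-cell lies in the closure of some $n$-cell and is open-adjacent to it, so it also lies in $A$, since $B$ is open. Hence $B=\varnothing$, a contradiction.
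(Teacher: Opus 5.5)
Your proof is correct, but its real engine is a general-position argument, which is a genuinely different route from the paper's. The paper joins $x,y\in U\setminus\Delta$ by a path $\alpha$ in $U$ and triangulates a closed bounded neighbourhood $\cl(V)\subset U$ of $\alpha([0,1])$ compatibly with $\Delta$, $\{x\}$, $\{y\}$. It then walks through the barycentres of a chain of $n$-simplices $\tau_1,\dots,\tau_s$ in which each $\tau_i\cap\tau_{i+1}$ has dimension $n-1$. Such a path misses all simplices of dimension $\le n-2$, hence misses $\Delta$. The existence of that chain is essentially your claim (a), and the paper reads it off directly from the openness of $V$. As you noticed, proving it amounts to the lemma again for the $(\le n-2)$-skeleton. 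You close the loop instead by a dimension count. The set $T=\{s-\alpha(t): s\in S,\ t\in[0,1]\}$ is the image of $S\times[0,1]$ under a semialgebraic map, so $\dim T\le n-1$, $T$ has empty interior, and arbitrarily small $v\notin T$ exist. This uses only the dimension theory of \cite[\S2.8]{BCR98} and no triangulation theorem, and it works verbatim over any real closed field. It also supplies the justification that the paper's chain step leaves implicit. The paper's route, in exchange, gives an explicit piecewise-linear path inside a triangulation.

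Two points to tidy up. First, nothing in your translation argument uses that $S$ is a union of cells, so apply it directly to $\Delta$, which is closed in $U$ after your first reduction. The cell decomposition, the adjacency graph and claims (a), (b) then become unnecessary. They are correct given the frontier condition you implicitly use, which is what makes $S$ closed in $U$ and the graph components open in $U\setminus S$. Second, $\alpha+v$ joins $x+v$ to $y+v$, not $x$ to $y$, so you must add the segments $[x,x+v]$ and $[y+v,y]$. These miss $\Delta$ once $\|v\|$ is smaller than the radius of balls about $x$ and $y$ contained in the open set $U\setminus\Delta$. The path $\alpha+v$ stays in $U$ once $\|v\|$ is smaller than the positive distance from the closed bounded set $\alpha([0,1])$ to $R^n\setminus U$. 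Curve selection and polygonal approximation are not needed.
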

\begin{proof}
As $U\subset K^n$ is semialgebraically connected, then for every $x,y\in U\setminus \Delta$ there exist a continuous and semialgebraic path $\alpha:[0,1]\to U$ such that $\alpha(0)=x$ and $\alpha(1)=y$ \cite[Prop.2.5.13]{BCR98}. Let us construct a continuous and semialgebraic path $\beta:[0,1]\to U\setminus \Delta$ such that $\beta(0)=x$ and $\beta(1)=y$. 

Let $\partial U:=\cl(U)\setminus U$, where $\cl(U)$ denotes the closure of $U$ in $R^n$ with respect to the Euclidean topology. As $U$ is open, $\partial U=\varnothing$ if and only if $U=R^n$. In what follows, we assume $\partial U\neq \varnothing$ as in the case $U=R^n$ the proof, suitably simplified, works as well. Observe that $\partial U$ is a closed semialgebraic set, as $U$ is an open semialgebaic subset of $R^n$. Given $x:=(x_1,\ldots,x_n)\in R^n$, we denote by $\|x\|:=(x_1^2+\ldots+x_n^2)^{1/2}$ the Euclidean norm of $x$. As $[0,1]\subset R$ is a closed and bounded semialgebraic set, $\alpha:[0,1]\to U$ is continuous and semialgebraic and $\partial U$ is closed and non-empty, the number
$$
M:=\frac{1}{2}\min_{t\in[0,1]}\{\inf\{||\alpha(t)-y|| : y\in\partial U\}\}
$$ 
is well defined and strictly positive by \cite[Prop.2.2.8]{BCR98} and \cite[Thm.2.5.8]{BCR98}. Define
$$
V:=\{x\in U : \inf\{\|x-\alpha(t)\| : t\in [0,1]\}< M\}
$$
which is, using again \cite[Prop.2.2.8]{BCR98}, an open semialgebraic neighbourhood of $\alpha([0,1])$ in $U$. By the definition of $M$, we have that $\cl(V)\subset U$. Moreover, $\cl(V)$ is a closed and bounded semialgebraic set, as $\alpha([0,1])$ is so by \cite[Thm.2.5.8]{BCR98}. Then, by \cite[Thm.9.2.1]{BCR98}, there exists a finite semialgebraic triangulation of $\cl(V)$ relative to the family of semialgebraic sets $\{\cl(V)\cap \Delta,\{x\},\{y\}\}$, that is, there is a finite simplicial complex $S:=(\sigma_i)_{i=1,\dots,t}$, a subcomplex $S'$ of $S$ and a continuous semialgebraic homeomorphism $\Phi:|S|\to V$ such that $\Phi(|S'|)=(\cl(V)\cap \Delta)\cup\{x,y\}$, where $|S|:=\bigcup_{\sigma\in S} \sigma$ and $|S'|:=\bigcup_{\sigma\in S'} \sigma$ denote the realisations of $S$ and $S'$, respectively. Observe that, as the semialgebraic dimension of $(\cl(V)\cap \Delta)\cup\{x,y\}$ is $\leq 2n-2$, then every simplex $\sigma\in S'$ has dimension strictly smaller than $2n-1$.
	
Denote by $S_d$ the set of all the simplices of $S$ of dimension $d\in\{0,\dots,n\}$ and by $b(\sigma)$ the barycenter of a simplex $\sigma\in S$. Let $\sigma, \sigma'\in S_0$ be the simplices such that $\Phi(\sigma)=x$ and $\Phi(\sigma')=y$. As $V$ is open and $x,y\in V$, then there exist two simplices $\tau,\tau'\in S_n$ such that $\sigma\subset \tau$ and $\sigma'\subset \tau'$. As $V$ is open and $\Phi^{-1}\circ \alpha$ is a continuous semialgebraic path between $\sigma$ and $\sigma'$, then there is a sequence of simplices $\tau_1,\dots,\tau_s\in S_n$ such that $\tau_1:=\tau$, $\tau_s:=\tau'$, $\tau_i\cap\tau_{i+1}\in S_{n-1}$ and $\Phi^{-1}(\alpha([0,1])\subset \tau_1\cup \ldots\cup\tau_s$. {Consider the piecewise affine map $\gamma_i:[0,1]\to |S|$ such that $\gamma_i(0)=b(\tau_i)$, $\gamma_i(1/2)=b(\tau_i\cap \tau_{i+1})$ and $\gamma_i(1)=b(\tau_{i+1})$ for every $i\in\{1,\dots,s-1\}$.} Observe that $\gamma_i([0,1])\cap \sigma''=\varnothing$ for every $\sigma''\in S_d$ with $d<2n-1$ and $i\in\{1,\dots,s-1\}$. Consider $\gamma_0:[0,1]\to \tau$ and $\gamma_{s}:[0,1]\to \tau'$ the parametrised segments such that $\gamma_0(0)=\sigma$, $\gamma_0(1)=b(\tau)$, $\gamma_{s}(0)=b(\tau')$ and $\gamma_{s}(1)=\sigma'$. Then, the semialgebraic path $\gamma:[0,1]\to |S|$ defined as the product 
\[
\gamma:=\gamma_0\cdot\gamma_1\cdot\ldots \cdot\gamma_{s-1}\cdot\gamma_{s}
\]
satisfies that $\gamma(0)=\sigma$, $\gamma(1)=\sigma'$ and $\gamma([0,1])\cap\tau=\varnothing$ for every $\tau\in S'\setminus\{\sigma,\sigma'\}$. Then, the semialgebraic path $\beta:[0,1]\to U$ defined as $\beta:=\Phi\circ\gamma$ is continuous and satisfies $\beta(0)=x$, $\beta(1)=y$ and $\beta([0,1])\cap \Delta=\varnothing$, as $\beta([0,1])\subset V$ and $V\cap \Delta\subset\Phi(S'\setminus\{\sigma,\sigma'\})$. That is, $\beta:[0,1]\to U\setminus\Delta$ is a semialgebraic and continuous path such that $\beta(0)=x$ and $\beta(1)=y$. Then, by \cite[Prop.2.5.13]{BCR98}, we conclude that $U\setminus \Delta$ is semialgebraically connected, as required. 
\end{proof}

We are ready to show Theorem \ref{Nash}.

\begin{proof}[Proof of Theorem \ref{Nash}]
As being semialgebraic is a property that is local on the semialgebraically connected components, we may assume that $U$ is semialgebraically connected. We may assume, in addition, that $0\in U$. By \cite[Thm.B]{Kai16a}, $f_1$ is $R$-analytic. Thus, by Theorem \ref{main}, there exists an open ball $B\subset U$ centred in 0 such that $f$ is semialgebraic on $B$. By Lemma \ref{polinomio}, there exists a non-zero polynomial $P\in K[\z,\t]$ such that $P(z,f(z))=0$ for each $z\in B$. As the function $z\mapsto P(z,f(z))$ is $K$-holomorphic on $U$ and vanishes identically on $B$, by \cite[Thm.2.13(2)]{PS03}, then $P(z,f(z))=0$ for each $z\in U$, {as} $U$ is semialgebraically connected. We are going to prove that the graph $\Gamma_f\subset K^{n+1}$ of $f$ is a semialgebraic set.

Let $D(\z)\in K[\z]$ be the discriminant of the polynomial $P(\z,\t)$ with respect to the variable $\t$ and denote by $\Delta:=\{z\in K^n:D(z)=0\}$ the discriminant locus of $P(\z,\t)$. As $\Delta$ is an algebraic subset of $K^n$ defined by a single polynomial equation, then $\Delta$ has semialgebraic dimension $2n-2$ and the set $U':=U\setminus\Delta$ is an open and dense subset of $U$. Consider the projection $\pi:K^{n+1}\to K^n, (z,t)\mapsto z$. As $\Delta\subset K^n$ is the discriminant locus of $P(\z,\t)$, we have that $\pi^{-1}(U')\cap X$ is a semialgebraic subset of $X$ and $\pi|_{\pi^{-1}(U')\cap X}: \pi^{-1}(U')\cap X\to U'$ is a Nash covering of finite degree {by the implicit function theorem \cite[Cor.2.9.8]{BCR98}}. Then, the graph $\Gamma|_{f|_{U'}}$ of $f|_{U'}$ is a semialgebraically connected component of $\pi^{-1}(U')\cap X$, as $f$ is continuous and $U'$ is semialgebraically connected by Lemma \ref{connes}. In particular, $\Gamma|_{f|_{U'}}$ is semialgebraic. Denote by $\cl_{U\times K}(\Gamma|_{f|_{U'}})$ the Euclidean closure of $\Gamma|_{f|_{U'}}$ in $U\times K$. As $f$ is continuous, $\Gamma_f$ is closed in $U\times K$, hence $\cl_{U\times K}(\Gamma|_{f|_{U'}})\subset\Gamma_f$. On the other hand, as $U'$ is dense in $U$ and $f$ is continuous, every $(z,t)\in \Gamma_f\setminus\Gamma_{f|_{U'}}$ is an accumulation point of $\Gamma_{f|_{U'}}$. This shows that $\Gamma_f=\cl_{U\times K}(\Gamma|_{f|_{U'}})$, so $\Gamma_f$ is semialgebraic, as the closure of a semialgebraic set is. We conclude that $f$ is a semialgebraic function, as required. 
\end{proof}

\bibliographystyle{amsalpha}

\end{document}